\let\csname ver@amsthm.sty\endcsname\relax
\let\theoremstyle\relax
\newcommand\nc\newcommand
\renewcommand
\DeclareMathOperator
\newtheorem
	\theoremstyle{plain}
	\theoremstyle{definition}
	\theoremstyle{remark}
\crefname
\dc{\id}{id}
\dc{\Tr}{Tr}
\dc\len{len}
\dc\rank{rank}
\dc\rec{rec}
\dc\SCC{SCC}
\dc\NotMin{NotMin}
\dc\NotMax{NotMax}
\dc\Min{Min}
\dc\Max{Max}
\nc{\f}{\frac}
\nc{\one}[1]{\mathbf{1}\!\left[#1\right]}
\nc{\fl}[1]{\left\lfloor #1 \right\rfloor}
\nc{\ceil}[1]{\left\lceil #1 \right\rceil}
\nc{\al}[1]{\begin{align*}#1\end{align*}}
\nc{\tx}{\texorpdfstring}
\nc{\ds}{\displaystyle}
\rc{\hat}{\widehat}
\rc{\theenumi}{\roman{enumi}}
\title{Finitely dependent insertion processes}
\author{Avi Levy}
\address{Avi Levy, Department of Mathematics,
University of Washington, Seattle, WA 98195, USA}
\email{avius at uw.edu}
\urladdr{\url{http://www.math.washington.edu/~avius}}
\keywords{Proper coloring, directed graph, weighted graph,
$m$-dependence, stationary process, shift of finite type}
\subjclass[2010]{60G10; 05C15; 60C05}
\date{\today}
\begin{document}

\begin{abstract}
	A $q$-coloring of $\mathbb Z$ is a random process assigning one of $q$ colors to
  each integer in such a way that consecutive integers receive distinct colors.
  A process is $k$-dependent if any two sets of integers separated by a distance  greater than $k$ receive independent colorings. Holroyd and Liggett
  constructed the first stationary $k$-dependent $q$-colorings by introducing
  an insertion algorithm on the complete graph $K_q$. We extend their
  construction from complete graphs to weighted directed graphs.
  We show that complete multipartite analogues of $K_3$ and $K_4$ are the only graphs whose insertion process is finitely dependent and whose insertion algorithm is consistent. In particular, there are no other such graphs among all unweighted graphs and among all loopless complete weighted directed graphs. Similar results hold if the consistency condition is weakened to eventual consistency. Finally we show that the directed de Bruijn graphs of shifts of finite type do not yield $k$-dependent insertion processes, assuming eventual consistency.
	\vspace{-5ex}
\end{abstract}

\maketitle

\section{Introduction}
	A {\bf proper $q$-coloring of $\m Z$} is a sequence of colors
  $(x_i)_{i\in \m Z}$ with $x_i\in [q]:=\{1,\ldots,q\}$ such that
  $x_i\not=x_{i+1}$ for all $i$. A random
  $q$-coloring
  $(X_i)_{i\in \m Z}$ is {\bf stationary} if $(X_i)_{i\in \m Z}$ and
  $(X_{i+1})_{i\in\m Z}$ are equal in law. A stationary $q$-coloring is
  {\bf $k$-dependent} if $(X_i)_{i< 0}$ and $(X_i)_{i\geq k}$ are independent,
  and {\bf finitely dependent} if it is $k$-dependent for some $k\geq 0$.

  The simplest examples of stationary finitely dependent processes are the
  {\bf block factors}. These are stochastic processes of the form
  $\{f(Y_i,\ldots, Y_{i+k})\}_{i\in\m Z}$ where $f$ is deterministic and
  $\{Y_i\}_{i\in \m Z}$ are an i.i.d. sequence. In the 1960s, Ibragimov and
  Linnik first suggested that there may exist non-block factor stationary
  finitely dependent processes \cite{ibragimov1965independent,MR0322926}.
  Since then, examples of such processes have been constructed by several authors
  in the course of studying properties of finitely dependent processes
  \cite{MR1176437,burton1993,MR1269540,MR972778,MR744235}. Until
  recently, it has been believed that most `natural' finitely dependent
  processes are block factors \cite{MR2721041}.

  Yet block factors have subtle limitations. For example, these processes are
  never supported on proper colorings \cite{MR3262072}. It turns out that
  finitely dependent processes do not have this limitation, although this fact
  is highly non-obvious and remains to be fully understood. This was discovered
  by Holroyd and Liggett in a recent breakthrough \cite{holroyd60}, in which
  they disproved a conjecture
  of Schramm \cite{holroyd64} by showing that stationary finitely dependent
  colorings of
  the integers exist. These are perhaps the first natural non-block
  factor finitely dependent processes.

  Specifically, Holroyd and Liggett constructed symmetric $3$- and
  $4$-colorings with these properties. It is remarkable that, while their construction produces a $q$-coloring for each integer $q\geq 2$, only when $q\in\{3,4\}$ is the coloring finitely dependent. Using a more complicated construction, these authors later obtained symmetric $q$-colorings for all $q\geq 4$ \cite{holroyd65}.


  As described in \cite{holroyd60}, the $q$-colorings therein have the following characterization. For each integer $q\geq 2$, let $Z=(Z_1,\ldots,Z_n)$ be a sequence of independent random variables each taking the values $1,2,\ldots,q$ with equal probability. Let $\sigma$ be an independent uniformly random permutation of $1,\ldots,n$, which we interpret as meaning that the symbol $Z_{\sigma(i)}$ arrives at time $i$. Let $E$ be the event that, for every time $t=1,\ldots,n$, the subsequence of $Z$ formed by those symbols that arrived up to time $t$ (ordered as in the original sequence $Z$) forms a proper coloring (i.e. no two consecutive elements in the subsequence are equal). The conditional law of $Z$ given $E$ equals the law of $(X_1,\ldots,X_n)$, where $X$ is the $q$-coloring constructed in \cite{holroyd60}.

  It was observed by Holroyd (personal communication) that the proper coloring condition in the previous paragraph may be replaced by a graph adjacency condition. The case of a $q$-coloring corresponds to the complete graph with vertex set $\{1,\ldots,q\}$, denoted $K_q$. A general graph will encode which pairs of vertices may appear consecutively. See e.g. \cite[Example 2.5]{borgs2006counting} for more on this perspective. Since few stationary finitely dependent colorings are currently known, it is natural to pursue this generalization in the search for new finitely dependent processes.
  
  Fix a finite graph $G$ containing at least one edge. Let $Z=(Z_1,\ldots,Z_n)$ be a sequence of independent uniformly random vertices of $G$. Let $\sigma$ be an independent uniformly random permutation of $1,\ldots,n$, which we interpret as meaning that the vertex $Z_{\sigma(i)}$ arrives at time $i$. Let $E$ be the event that, for every time $t=1,\ldots,n$, the subsequence of $Z$ formed by those vertices that arrived up to time $t$ (ordered as in the original sequence $Z$) forms a path in $G$. Finally, let $(Y_1,\ldots,Y_n)$ denote random variables whose law equals the conditional law of $Z$ given $E$.


  To produce a stochastic process $(X_i)_{i\in\m Z}$ from these finite marginals, we must apply a limiting procedure. Let $P_n$ denote the probability mass function of $(Y_1,\ldots,Y_n)$. If for all $n$ the mass functions of $(Y_1,\ldots,Y_{n-1})$ and $(Y_2,\ldots,Y_n)$ equal $P_{n-1}$, then by the Kolmogorov Extension Theorem \cite{PTE} there exists a unique stochastic process $(X_i)_{i\in\m Z}$ such that $(X_1,\ldots,X_n)$ has mass function $P_n$ for all $n\geq 1$. In this case we say that $\{P_n\}_{n=1}^{\infty}$ is consistent and that $G$ satisfies property (C). We say the marginals are eventually consistent if the preceding condition holds for all sufficiently large $n$, in which case $G$ satisfies property (EC). In this case, we construct a process $(X_i)_{i\in\m Z}$ in the same manner as before by taking a projective limit \cite{MR1876169} over $n$ sufficiently large, generalizing the previous construction when $G$ has property (C). We call $(X_i)_{i\in\m Z}$ the {\bf insertion process} associated to $G$.

  Holroyd and Liggett showed that for all $q\geq 2$, the complete graph $K_q$
  has property (C) \cite[Proposition 10]{holroyd60}. Furthermore, they
  discovered that the insertion process associated to $K_4$ is $1$-dependent
  and the insertion process associated to $K_3$ is $2$-dependent. Moreover,
  these are the \emph{only} values of $q$ for which the process is finitely
  dependent \cite[Proposition 13]{holroyd60}.

  One may embellish these examples in our general setting. Replacing each of
  the $3$ (resp. $4$) colors with $r$ copies of itself yields the complete
  multipartite graphs $K_{r,r,r,r}$ (resp. $K_{r,r,r}$). Both of these graphs
  are easily seen to have property (C) and have a $1$- (resp. $2$-)dependent
  insertion process.
  We establish the remarkable fact that these are essentially the \emph{only} (EC) graphs
  with a finitely dependent insertion process.

	\begin{thm}\label{thm:main}
		Consider a finite graph $G$ having property (C). Then the insertion
    process associated to $G$ is $k$-dependent if and only if either:
    $G=K_{r,r,r}$ and $k\geq 2$, or $G=K_{r,r,r,r}$ and $k\geq 1$.

    If instead we assume that $G$ has property (EC), then the same characterization holds except $G$ may in addition be a disjoint union of one of the above graphs with a collection of isolated vertices.
	\end{thm}

  We deduce \cref{thm:main} from \cref{thm:second}, which is a result about weighted graphs. In a {\bf weighted digraph}, an edge from vertex $i$ to vertex $j$ has weight $w(i,j)$. There is no assumption that $w(i,j)=w(j,i)$, and a weight of 0 signifies that no edge is present. There is a natural extension of the insertion process to the setting of weighted digraphs, provided that the digraph satisfies a natural analogue of property (EC)\footnote{See \cref{section:weighted}; the only difference is that instead of conditioning on the sequence of vertices forming a path, a bias is applied depending on certain edge weights.}. In lieu of a precise description of these properties for weighted digraphs, we present the following iterative sampling algorithm for the insertion procedure on weighted digraphs.

  \begin{algorithm}\
  \label{alg:wGraphInsert}
  \begin{enumerate}
    \item Let $P_1$ assign uniform weight to all $v\in V$.

    \item Given a random $X\in V^n$ with mass function $P_n$, sample $I\in\{0,\ldots,n\}$ and $\c V\in V$ with probability proportional to
    $$
      \m P(I=i,\c V=v)\propto w(x_{i},v)\cdot w(v,x_{i+1}),
    $$
    where the first or second factor is left out when $i=0$ or $i=n$,  respectively.

    \item Output the sequence $(X_1,\ldots,X_I,\c V,X_{I+1},\ldots,X_n)$. It has mass function $P_{n+1}$.
  \end{enumerate}
  \end{algorithm}

  A weighted digraph is {\bf uniform of weight $w$} if $w(i,j)\in \{0,w\}$ for all vertices $i$ and $j$, and if in addition $w(i,j)=w(j,i)$ and $w(i,i)=0$. We remark that uniform weight graphs underly a model of endpoint-weighted insertion introduced in \cite[\S 4]{holroyd63}. 

  \begin{thm}\label{thm:second}
    Consider a finite uniform weight graph $G$ having property (C). Then the insertion
    process associated to $G$ is $k$-dependent if and only if either:
    $G=K_{r,r,r}$ and $k\geq 2$, or $G=K_{r,r,r,r}$ and $k\geq 1$.

    If instead we assume that $G$ has property (EC), then the same characterization holds except $G$ may in addition be a disjoint union of one of the above graphs with a collection of isolated vertices.    
  \end{thm}

	\cref{thm:main,thm:second} demonstrate that the requirement of satisfying 
  property (EC) and having a finitely dependent associated insertion process is extremely restrictive on a graph. This trend continues beyond the setting of uniform weight graphs.

	\begin{thm}\label{thm:third}
		Let $G$ be a weighted digraph satisfying property (C) such that $w(i,i)=0$ and $w(i,j)>0$ for all distinct vertices $i$
    and $j$. Then the insertion process associated to $G$ is $k$-dependent
    if and only if $G$ is unweighted and either: $G=K_3$ and $k\geq 2$;
    or $G=K_4$ and $k\geq 1$.
	\end{thm}

  As described previously, we are considering generalizations of a family of colorings constructed in \cite{holroyd60} in the search for new finitely dependent processes. We have considered more general graph constraints in place of colorings. However, there is an even broader type of local constraint to consider, known as a shift of finite type \cite{MR1369092,holroyd64}. A {\bf loopless shift of finite type} is a set
  of the form
	\[
		S=\Big\{x\in [q]^{\m Z}\colon (x_{i+1},\ldots,x_{i+n})\in W\quad
    \forall i\in \m Z\Big\},
	\]
	for some $W\subset [q]^n\setminus \{(i,\ldots,i)\colon i\in [q]\}$. The
  {\bf de Bruijn graph} \cite{MR0018142} associated to $S$ has vertex set
  $W$ and edge set
	\[
		E=\Bigl\{\bigl((x_1,\ldots,x_n),(x_2,\ldots,x_{n+1})\bigr)\colon
    (x_1,\ldots,x_n),(x_2,\ldots,x_{n+1})\in W\Bigr\}.
	\]

	\begin{samepage}
	\begin{thm}\label{thm:shiftFinite}
		Consider a loopless shift of finite type with de Bruijn graph $G$.
    Suppose that $G$ has property (EC). Then the insertion process
    associated to $G$ extends to a process supported on the shift of finite
    type. Moreover this process is not finitely dependent.
	\end{thm}
	\end{samepage}

	When $n=2$, shifts of finite type correspond to edge sets of directed
  graphs. In analogy with \cref{thm:main,thm:second,thm:third}, one might
  expect $K_3$ and $K_4$ to appear in \cref{thm:shiftFinite}. The reason this is not the case is that 
  \cref{thm:main,thm:second,thm:third} pertain to insertion processes on the vertex set, whereas \cref{thm:shiftFinite} corresponds to insertion processes on the edge set.

  \subsection*{Open questions}

    We have presented a general algorithm that produces a sequence of mass functions on paths of growing length. One must impose some condition on these mass functions in order to obtain a limiting process on $\m Z$. We have considered two such conditions: property (C), and property (EC), corresponding to consistency and eventual consistency of the mass functions, respectively.

    But there are other ways to obtain a limiting process on bi-infinite paths. For example, one may use weak limits.

    \begin{question}\label{question:1}
      Does \cref{thm:main} continue to hold if we replace property (EC) with the assumption that the measures induced by the mass functions $P_n$ (constructed above) converge weakly?
    \end{question}

    In \cref{thm:third}, we do not even know if property (C) can be weakened to property (EC), let alone to weak limits.

    \begin{question}\label{question:2}
      Does \cref{thm:third} continue to hold for strongly connected weighted digraphs if we replace property (C) with property (EC)?
    \end{question}

    While we have established several results in the general setting of weighted digraphs, there remains a relatively simpler context that remains unresolved.
    \begin{question}\label{question:3}
      Does \cref{thm:main} continue to hold for strongly connected unweighted digraphs?
    \end{question}

    Lastly, recall that in \cref{thm:third} we must assume strict positivity of the edge weights, which is an open condition (in the topological sense). It may come as a surprise that there does not appear to be a simple limiting argument that allows one to remove this assumption. One way to see this is that the conclusions of the theorem must change when one removes these positivity conditions, due to complete multipartite graphs with more than two vertices in each of the partite sets.
    \begin{question}\label{question:4}
      If we remove the assumption that $w(i,j)>0$ for all distinct vertices $i$ and $j$ but retain the hypotheses that property (C) holds and that the insertion process is $k$-dependent in \cref{thm:third}, does it follow that $G$ is unweighted and either $G=K_{r,r,r}$ and $k\geq 2$ or $G=K_{r,r,r,r}$ and $k\geq 1$?
    \end{question}

	\subsection*{Overview}
	\Cref{section:weighted} presents the main construction underlying the remainder of the paper, which consists of a sequence of mass functions on paths of growing length in a weighted digraph. \cref{thm:main,thm:second} are proven in \Cref{section:unifWeight}. Uniform
  weight graphs appear in Subsection~\ref{subsection:propUnif},
  complete multipartite graphs in Subsection~\ref{subsection:modClique},
  and the proofs of \cref{thm:main,thm:second} are in
  Subsection~\ref{subsection:uniqueUnif}. \Cref{section:arbWeight} combines
  \cref{thm:second} with some additional arguments to deduce \cref{thm:third}.
  Lastly, we deduce \cref{thm:shiftFinite} in \Cref{section:shifts} from \Cref{thm:lackTri} in Subsection~\ref{section:kDep}.

\section{Weighted Insertion}\label{section:weighted}
  In this section we introduce a generalization of the construction in \cite[\S 2]{holroyd65} to weighted digraphs.


    Let $V$ be a finite alphabet. A {\bf word} (of length $n$) is a finite sequence $x=(x_1,x_2,\ldots,x_n)\in V^n$, which we sometimes abbreviate to $x_1x_2\cdots x_n$. The word of length $0$ is denoted by $\emptyset$. Let $S_n$ be the symmetric group of all permutations of $1,\ldots,n$. Let $x\in V^n$ be a word and let $\sigma\in S_n$ be a permutation. We interpret $\sigma$ as meaning that at time $t=1,\ldots,n$ symbol $x_{\sigma(t)}$ arrives in (relative) position $\sigma(t)$.
    
    Let $\Min(\sigma)$ and $\Max(\sigma)$ denote the sets of running minima and maxima of $\sigma$, respectively. For example, $\Min(\sigma)$ is given by
    $$
      \Min(\sigma):=\{1\leq t\leq n\colon \sigma(t)=\min_{s\leq t}\sigma(s)\}.
    $$
    If $\sigma(t)$ is neither a running minimum nor maximum, then at time $t$ the symbol $x_{\sigma(t)}$ is inserted between $x_{\sigma(t^-)}$ and $x_{\sigma(t^+)}$ where
    $$
      \sigma(t^-)=\max_{s<t}\{\sigma(s)\colon \sigma(s)<\sigma(t)\}\text{ and }\sigma(t^+)=\min_{s<t}\{\sigma(s)\colon \sigma(s)>\sigma(t)\}.
    $$

    A {\bf weighted digraph} with vertex set $V$ is a function $w\colon V^2\to\m R_{\geq 0}$. 
    We define the {\bf weight} of the pair $(x,\sigma)\in V^n\times S_n$ to be 
    \begin{equation}\label{eq:weight}
      w(x;\sigma):=\prod_{t\not\in \Min(\sigma)}w\bigl(x_{\sigma(t^-)},x_{\sigma(t)}\bigr)\prod_{t\not\in\Max(\sigma)}w\bigl(x_{\sigma(t)},x_{\sigma(t^+)}\bigr).
    \end{equation}
    For example, when $\sigma=\id$ is the identity permutation of $1,\ldots,n$ we have $w(x;\id)=w(x_1,x_2)\cdots w(x_{n-1},x_n)$. We denote this quantity by $w(x)$. When the length of $x$ is at most 1, we have that $w(x)=1$.

    If we imagine building the word dynamically using $\sigma$, then when $x_{\sigma(t)}$ is inserted between $x_{\sigma(t^-)}$ and $x_{\sigma(t^+)}$ a multiplicative weight of $w(x_{\sigma(t^-)},x_{\sigma(t)})$ $w(x_{\sigma(t)},x_{\sigma(t^+)})$ is incurred in \eqref{eq:weight}.

	\begin{figure}
		\centering
		\def\svgwidth{\textwidth}
		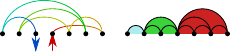
		\caption{Left: The arrival order for $\sigma=4752613$. Right: Multiply the edge weights to get $w(x_1\cdots x_7;\sigma)$.
    \label{fig:bubble}}
	\end{figure}

	\begin{dfn}\label{dfn:B}
		Given a weighted digraph with vertex set $V$ and a word $x\in V^n$, define \[B(x):=\sum_{\sigma\in S_n}w(x;\sigma).\]
	\end{dfn}
	This is a generalization of the building number defined in \cite{holroyd60}, which is the special case consisting of the weighted digraph $w$ with vertex set $\{1,\ldots,q\}$ and weight function $w(i,j)=\one{i\not=j}$.

  Clearly $B(x)>0$ if and only if there exists $\sigma\in S_n$ such that $w(x;\sigma)>0$. In this case $\sigma=\id$ works. Word $x$ has {\bf positive weight} if either of the equivalent conditions $B(x)>0$ or $w(x)>0$ holds.

  For a word $x=x_1x_2\cdots x_n$, we write $\hat x_i=x_1\cdots x_{i-1}x_{i+1}\cdots x_n$.
	\begin{lemma}\label{lem:rec}
	Suppose that $x\in V^n$. Then $B(x)$ is equal to
	\begin{equation*}
		B(\hat x_1)w(x_1,x_2)+\sum_{i=2}^{n-1} w(x_{i-1},x_i)\; B(\hat x_i)\; w(x_i,x_{i+1})+w(x_{n-1},x_n)B(\hat x_n).
	\end{equation*}
	\end{lemma}
	\begin{proof}
		Fix an index $i$. For any $\sigma$ which satisfies $i=\sigma(n)$,
		\[
			w(x;\sigma)=w(x_{i-1},x_i)\; w(\hat x_i;\widehat{\sigma}_i)\;
      w(x_i,x_{i+1}),
		\]
    with the obvious modifications when $i=1,n$. Summing over all $\sigma\in S_n$ yields the result.
	\end{proof}

	\subsection{Eventual Consistency}
  \label{subsection:weightedInsert}
    Let $G$ be a weighted digraph with (finite) vertex set $V$. Say that $G$ is {\bf recurrent} if there exist arbitrarily long words of positive weight. We remark that this is equivalent to there existing positive-weight words of \emph{every} length, since if $w(x_1x_2\cdots x_n)>0$ then also $w(x_1x_2\cdots x_k)>0$ for all $1\leq k\leq n$. Furthermore, it is easy to see that $G$ is recurrent if and only if 
    \begin{equation}\label{eq:posWeight}
      \sum_{y\in V^n} B(y)>0\qquad\text{ for all }n\geq 0.
    \end{equation}
    If $G$ is a recurrent digraph, then for all $n\geq 0$ we define a probability mass function $P_n$ on $V^n$ by setting
    \begin{equation}\label{eq:massDef}
      P_n(x)=\frac{B(x)}{\sum_{y\in V^n} B(y)},\qquad x\in V^n.
    \end{equation}    

    It is a simple consequence of \Cref{lem:rec} that \Cref{alg:wGraphInsert} presented in the introduction does in fact sample according to the mass function $P_n$ in \eqref{eq:massDef}. Moreover, \Cref{alg:wGraphInsert} is well-defined provided that $G$ is recurrent (otherwise, it terminates early if it reaches an $n$ such that there are no positive-weight words of length $n$).

    If $G$ is recurrent and the mass functions $\{P_n\}_{n=0}^{\infty}$ satisfy
		\begin{equation}\label{eq:consist}
			P_n(x)=\sum_{v\in V}P_{n+1}(xv)=\sum_{v\in V}P_{n+1}(vx)\qquad\text{ for all }n\geq 0,
		\end{equation}
    we say that $G$ satisfies {\bf property (C)} and that the mass functions are {\bf consistent}. Likewise if $G$ is recurrent and \eqref{eq:consist} holds for all sufficiently large $n$, we say that $G$ satisfies {\bf property (EC)} and that the mass functions are {\bf eventually consistent}.

    Clearly, equation \eqref{eq:consist} is equivalent to the existence of constants ${C_n>0}$ such that \eqref{eqn:propCdfn} holds for all $x\in V^n$:
    \begin{equation}\label{eqn:propCdfn}
      \sum_{v\in V}B(xv)=\sum_{v\in V}B(vx)=C_nB(x).
    \end{equation}
    Thus, a recurrent graph satisfies property (C) if and only if there exists $C_n>0$ such that \eqref{eqn:propCdfn} holds for all $n\geq 0$. Similarly, a recurrent graph satisfies property (EC) if and only if for all sufficiently large $n$, there exists $C_n>0$ such that \eqref{eqn:propCdfn} holds.





		When the associated mass functions are eventually consistent, the Kolmogorov
    extension theorem \cite{PTE} establishes that there exists a unique process $(X_i)_{i\in \m Z}$ with marginal distributions given by
		$$
			\m P\bigl((X_{i+1},\ldots,X_{i+n})=x\bigr)=P_n(x)
		$$
    for all sufficiently large $n$. This is the {\bf insertion process}
    associated to an (EC) weighted digraph. It follows by construction that the insertion process is stationary.


		\begin{lemma}\label{lem:kDep}
			Suppose that $G$ has property (EC). Then the associated insertion
      process is $k$-dependent if and only if for all $n,m\in\m N$
      sufficiently large, there are positive constants $C_{n,m}>0$ for which
			\begin{equation}\label{eq:factor}
				\sum_{W\in V^k}B(x\; W\; y)=C_{n,m}B(x)B(y),
        \qquad x\in V^n\text{ and }y\in V^m.
			\end{equation}
		\end{lemma}
		\begin{proof}
			By normalizing \eqref{eq:factor} we see it is equivalent to
			\begin{equation*}\label{eq:factor2}
				\sum_{W\in V^k}P_{n+k+m}(x\; W\; y)=P_n(x)P_m(y).
			\end{equation*}
      This is equivalent to the associated insertion process $(X_n)_{n\in\m Z}$ having the property that $(X_i)_{i\in I}$ and $(X_j)_{j\in J}$ are independent for any pair of sufficiently large intervals $I$ and $J$ separated by a distance of $k$ or greater. Since restriction preserves independence, we may remove the adjective `sufficiently large' from the previous sentence.
		\end{proof}

		Holroyd and Liggett \cite{holroyd60} proved that the (unweighted, undirected) complete
    graph $K_q$ has property (C) for all $q\geq 2$. Furthermore, they showed
    that the associated insertion process
    is $k$-dependent if and only if $q=3$ and $k\geq 2$ or $q=4$ and $k\geq 1$.
    In \Cref{section:unifWeight} we show that apart from minor modifications,
    these are the only unweighted and undirected graphs with property (EC) for which the associated insertion process is finitely dependent.

	\subsection{Necessary conditions for \tx{$k$}{k}-dependence}
  \label{section:kDep}
		Consider a weighted digraph $G$ satisfying property (EC). In this subsection, we present necessary conditions for the associated insertion process to be finitely dependent. We show that such digraphs must contain directed triangles (\cref{thm:lackTri}) and are not too far from being strongly connected (\cref{lem:restrictiveStructure}).

		A {\bf directed triangle on $(a,b)$} is a triple of vertices $(a,b,c)\in V^3$ that satisfy	$w(a,b)\; w(b,c)\; w(a,c)>0$. We consider weighted digraphs that lack directed triangles. There is a natural interpretation of this condition in relation with \cref{alg:wGraphInsert}: it is equivalent to requiring all insertions to occur at endpoints of the interval.
		\begin{lemma}\label{lem:lackTri}
			For a weighted digraph lacking directed triangles, any $x\in V^n$ satisfies $B(x)=2^{n-1}w(x)$.
		\end{lemma}
		\begin{proof}
			For any $1<i<n$, since $w(x_{i-1},x_i)>0$ and $w(x_i,x_{i+1})>0$ we must have that $w(x_{i-1},x_{i+1})=0$. Applying \cref{lem:rec} yields that
      \begin{equation*}
        B(x)=w(x_1,x_2)\cdot B(\hat x_1)+w(x_{\ell-1},x_\ell)\cdot B(\hat x_\ell),
      \end{equation*}
      which implies the claim by induction on the length of the word.
		\end{proof}

		\begin{lemma}\label{thm:lackTri}
      If a weighted digraph satisfies property (EC) and the associated
      insertion process is finitely dependent, then it contains a directed triangle.
		\end{lemma}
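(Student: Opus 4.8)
The plan is to argue by contradiction: suppose $G$ has no directed triangle yet its insertion process is $k$-dependent for some $k$, and derive a contradiction. The first step is to observe that without directed triangles the building counts become completely explicit. Indeed, \cref{lem:lackTri} gives $\tilde B(x)=2^{|x|-1}$ whenever $w(x)>0$, and since $B(x)=w(x)\tilde B(x)$ by \cref{lem:rec} (and both sides vanish when $w(x)=0$), we get
\[
	B(x)=2^{|x|-1}\,w(x)\qquad\text{for every word }x.
\]
In particular $B(a)=1$ for every single letter $a$: take $|x|=1$, so that $w(x)$ is the empty product.

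Next I would apply the $k$-dependence criterion of \cref{lem:kDep} to the single-letter words $x=a$ and $y=b$. It states that $\sum_{W\in V^k}B(aWb)=C_{1,1}B(a)B(b)=C_{1,1}$ for a constant $C_{1,1}>0$ independent of $a,b$. Using the displayed formula, $B(aWb)=2^{k+1}w(aWb)$, and $w(aWb)$ is the product of the edge weights traversed along $a\,W_1\cdots W_k\,b$; summing over $W$ turns the left-hand side into $2^{k+1}(A^{k+1})_{ab}$, where $A$ is the weighted adjacency matrix $A_{ij}=w(i,j)$. Hence
\[
	A^{k+1}=c\,J,\qquad c=\frac{C_{1,1}}{2^{k+1}}>0,
\]
with $J$ the all-ones matrix.

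Finally I would extract a contradiction from the spectrum. The matrix $cJ$ has a single nonzero eigenvalue $c|V|$, of multiplicity one. Passing from $A$ to $A^{k+1}$ replaces each eigenvalue $\mu$ of $A$ by $\mu^{k+1}$ (via the Jordan form, each Jordan block of eigenvalue $\mu$ becomes one of eigenvalue $\mu^{k+1}$), and $\mu\neq 0$ if and only if $\mu^{k+1}\neq 0$; so $A$ has exactly one nonzero eigenvalue $\mu$, with $\mu^{k+1}=c|V|\neq 0$, and therefore $\tr A=\mu\neq 0$. On the other hand, the absence of directed triangles forbids loops: if $w(v,v)>0$ then $(v,v,v)$ is a directed triangle on $(v,v)$ in the sense of \cref{dfn:dirTri}. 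Hence every diagonal entry $A_{vv}=w(v,v)$ is zero, so $\tr A=0$ — a contradiction. We conclude that a finitely dependent insertion process forces $G$ to have a directed triangle.

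I expect the substantive step to be the passage in the second paragraph: once one notices that ``no directed triangle'' collapses $B$ to a bare product of edge weights, the $k$-dependence hypothesis becomes the clean matrix identity $A^{k+1}=cJ$, after which the trace is visibly the right obstruction — looplessness pins $\tr A$ to $0$, while $A^{k+1}$ having rank one forces $A$ to carry exactly one nonzero eigenvalue. The one technical point to handle with a little care is that $A$ need not be diagonalizable, so the eigenvalue bookkeeping should be phrased through Jordan blocks rather than via a naive rank argument applied directly to $A$.
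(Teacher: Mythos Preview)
Your proof is correct and follows essentially the same route as the paper: collapse $B(x)$ to $2^{|x|-1}w(x)$ via \cref{lem:lackTri}, apply \cref{lem:kDep} with single-letter $x,y$ to get $A^{k+1}=cJ$, and finish with the trace obstruction against looplessness. Your Jordan-block justification of the eigenvalue count is somewhat more explicit than the paper's one-line claim that $A$ has eigenvalue $0$ with algebraic multiplicity $|V|-1$, but the argument is the same.
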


		\begin{proof}
			Suppose there was a weighted digraph $G$ lacking directed triangles that satisfies property (EC) and whose associated insertion process is $k$-dependent for some $k\geq 0$. Let $V$ denote its vertex set. Applying \cref{lem:kDep}, it follows that for all sufficiently large $m$ and $n$ and for all vertices $i,j\in V$ we have that
      \begin{equation}\label{eq:lem8sum}
        \sum_{P\in V^m,W\in V^k,Q\in V^m}B(iPWQj)=C_{m,n}\sum_{P\in V^m}B(i\; P)\sum_{Q\in V^n}B(Q\; j).
      \end{equation}
      Let $A$ denote the adjacency matrix of $G$, which is the $V\times V$ matrix with $A_{ij}=w(i,j)$. Using \cref{lem:lackTri} to express the left hand side of \eqref{eq:lem8sum} in terms of the adjacency matrix we obtain that
      \begin{equation}\label{eq:adjPow}
        (A^{m+k+n+1})_{ij}=2^{-m-k-n+1}\sum_{P\in V^m,W\in V^k,Q\in V^m}B(iPWQj).
      \end{equation}

      Combining \eqref{eq:lem8sum} with \eqref{eq:adjPow} implies that $\rank(A^N)\leq 1$ for $N=m+k+n+1$. Let $\lambda_1,\ldots,\lambda_{|V|}$ denote the multiset of (possibly complex) eigenvalues of $A$, listed according to algebraic multiplicity. Then $\lambda_1^N,\ldots,\lambda_{|V|}^N$ comprise the multiset of eigenvalues of $A^N$. But since $\rank(A^N)\leq 1$, there can be at most 1 nonzero eigenvalue.

      In fact, all eigenvalues must vanish. To see why, consider $\Tr A=\sum_{i\in V}w(i,i)$. Since $(i,i,i)$ is a directed triangle, our hypothesis that $G$ lacks directed triangles implies that $w(i,i)=0$ and thus $\Tr A=0$. Combined with the previous paragraph, this implies that all eigenvalues vanish and thus $A$ is nilpotent.

      From this it follows that for all but finitely many words $x\in \bigcup_{\ell\geq 0} V^\ell$ we must have $w(x)=0$. This implies that $G$ is not recurrent, contradicting the assumption that $G$ satisfies property (EC).
		\end{proof}

		A weighted digraph $G$ is {\bf strongly connected} if for every pair of
    vertices $i,j\in V$, there is a positive-weight word $x\in \bigcup_{\ell\geq 1}V^\ell$ that begins with $i$ and ends with $j$. We denote the existence of such an $x$ by writing $i\to j$. The
    {\bf strongly connected components} (SCCs) of a weighted digraph are the
    equivalence classes of the relation
    \[
      \{(i,j)\in V^2\colon i\to j\text{ and }j\to i\}.
    \]
    Note that $i\to i$ because singleton words are defined to have weight 1 (see the remark prior to \cref{dfn:B}). Also, recall from our discussion following \cref{dfn:B} that a word has positive weight if and only if $w(x)>0$, which occurs if and only if $B(x)>0$.

    Recall that a weighted digraph is a function $w\colon V^2\to \m R_{\geq 0}$. A subset $U\subseteq V$ induces a subdigraph by restriction of $w$ to $U^2$.

    Say that an SCC is {\bf recurrent} if the induced subdigraph of $G$ is recurrent. By our remarks at the beginning of \cref{subsection:weightedInsert}, an SCC with vertex set $\c C$ is recurrent if and only if there are arbitrarily large $n$ such that there exists a positive-weight word $x\in \c C^n$.

    Note that any SCC with at least two vertices is recurrent, for if $a$ and $b$ are any two such vertices, then each of the words $(ab)^n\in \c C^{2n}$ have positive weight. Also observe that for a directed acyclic graph, the SCCs are singletons, none of which is recurrent. Moreover, a singleton SCC is recurrent if and only if its vertex has a self-loop. In general, each directed cycle is contained in a recurrent SCC.

    For the remainder of this section, we restrict attention to weighted digraphs satisfying property (EC) whose associated insertion process is finitely dependent. We will show in Lemma \ref{lem:restrictiveStructure} that such digraphs are not too far from being strongly connected.


    \begin{lemma}\label{lem:restrictiveStructure}
      Let $G$ be a weighted digraph with property (EC) whose associated insertion process is finitely dependent.
      \begin{enumerate}
      \item $G$ has a
      unique recurrent SCC.
      \item Vertices not in the recurrent SCC belong to
      singleton SCCs that have no directed path to or from the recurrent SCC.
      \item The subdigraph of $G$ induced by the recurrent SCC also has property (EC). Moreover, its associated insertion process coincides with that of $G$.
      \item If moreover $G$ has property (C), then it is strongly connected.
      \end{enumerate}
    \end{lemma}

    In particular, it follows from Lemma \ref{lem:restrictiveStructure} that the family of finitely dependent insertion processes associated to weighted digraphs is unchanged if one restricts attention to strongly connected weighted digraphs.

    \begin{proof}
      Let $(X_i)_{i\in\m Z}$ denote the insertion process on $G$. Suppose without loss of generality that it is $(k+1)$-dependent, for some $k\geq 0$. In particular, we have the i.i.d.\ subsequence $(X_{ik})_{i\in\m Z}$.

      Let $\c C_1,\ldots,\c C_\ell$ denote the strongly connected components of $G$. The weighted digraph $G$ induces the weighted digraph $G_{\SCC}$ on the set of SCCs, denoted $V_{\SCC}:=\{\c C_1,\ldots,\c C_\ell\}$, given by the weight
      $$
        w_{\SCC}(\c C_i,\c C_j):=\sum_{u\in \c C_i,v\in \c C_j}w(u,v).
      $$
      Observe that $\c C_i\to\c C_j$ in $G_{\SCC}$ if and only if for some $u\in \c C_i$ and $v\in \c C_j$, there is a positive-weight word $x$ composed of vertices of $G$ that begins with $u$ and ends with $v$. Thus if $\c C_i\to \c C_j$ and $\c C_j\to \c C_i$, then $\c C_i\cup \c C_j$ is also an equivalence class, whereupon $\c C_i=\c C_j$.

      Consider the function $f\colon V\to V_{\SCC}$ that assigns to each vertex its strongly connected component. Then the sequence $\bigl(f(X_i)\bigr)_{i\in\m Z}$ has the property that for all $i<j$, we have $f(X_i)\to f(X_j)$ in $G_{\SCC}$ almost surely. But if $f(X_i)$ occurs once in the sequence, it a.s. occurs infinitely many times (by passing to an i.i.d.\ subsequence). Thus there exists $\ell>j$ such that $f(X_\ell)=f(X_i)$. Consequently the sequence
      \begin{equation}\label{eq:isConstant}
        \bigl(f(X_i)\bigr)_{i\in\m Z}=(\ldots,\c C_{\rec},\c C_{\rec},\ldots)
      \end{equation}
      is a.s. constant. Furthermore, the unique value $\c C_{\rec}$ that it takes must be a recurrent SCC, since the marginals of the process $(X_i)_{i\in \m Z}$ then yield arbitrarily long positive-weight words in $\bigcup_{\ell\geq 1}\c C_{\rec}^\ell$.

      On the other hand, each vertex $v\in V$ belonging to a recurrent SCC occurs infinitely often in the sequence $(X_i)_{i\in\m Z}$. Indeed, we can fix a sufficiently long positive-weight word containing $v$ and observe that it occurs with positive density in $(X_i)_{i\in\m Z}$ by finite dependence. Thus by the previous paragraph, it follows that $\c C_{\rec}$ is the a unique recurrent SCC of $G$, establishing property (i).

      Property (ii) follows by combining our previous observation that an SCC containing distinct vertices is recurrent with the following argument. If there was a directed path joining $\c C_{\rec}$ to another strongly connected component denoted $\c C'$, then there would be arbitrary long positive-weight words that contain vertices in $\c C'$. But then $\c C'$ appears in the sequence $\bigl(f(X_i)\bigr)_{i\in\m Z}$, by consideration of a sufficiently long marginal of the insertion process. This contradicts \eqref{eq:isConstant}, proving (ii).

      For all $n>|V|$, every word $x\in V^n$ contains some vertex at least twice. Thus by (ii), it follows that if such a word satisfies $B(x)>0$, then necessarily $x\in \c C_{\rec}^n$. Hence
      $$
        \sum_{x\in V^n}B(x)=\sum_{x\in \c C_{\rec}^n}B(x),
      $$
      and it follows that all sufficiently long marginals of the insertion processes associated to $G$ and the subdigraph induced by $\c C_{\rec}$ coincide. This yields (iii).

      Finally, (iv) follows from the observation that $B(x)=1$ for all words of length 1. Thus if $G$ satisfies property (C), the random variable $X_0$ in the associated insertion process is uniformly random on the vertex set $V$. But by \eqref{eq:isConstant} we have that $X_0\in \c C_{\rec}$ almost surely, implying that $V=\c C_{\rec}$ and therefore $G$ is strongly connected.
    \end{proof}

\section{Uniform Weight Graphs}\label{section:unifWeight}
	We establish \cref{thm:second} in this section, and deduce \cref{thm:main}
  as a corollary. The plan of attack is as follows:
	\begin{enumerate}
		\item \Cref{subsection:propUnif} defines the special class of weighted digraphs to which \cref{thm:second} applies, which we call uniform weight graphs. We deduce necessary structural properties for a uniform weight graph to satisfy property (EC).
		\item \Cref{subsection:modClique} is devoted to complete multipartite
    graphs of uniform weight. It is shown that understanding finite dependence for such graphs reduces to the analysis of complete graphs, which has already been undertaken in \cite{holroyd60}.
		\item \Cref{subsection:uniqueUnif} proves \cref{thm:second} by using a combinatorial argument to show that the structural properties in \Cref{subsection:propUnif} imply that the graph is complete multipartite, then applying the results of \Cref{subsection:modClique}.
	\end{enumerate}
		\subsection{Uniform weights}
    \label{subsection:propUnif}

		Uniform weight graphs are weighted digraphs that are undirected and of constant weight. Their associated insertion processes are sufficiently general to encompass the end-weighted insertion processes of \cite[Section 4]{holroyd63} as a special case.

		\begin{samepage}
		\begin{dfn}\label{dfn:unifWeight}
      A {\bf uniform weight graph} with weight $w>0$ is a weighted digraph whose weight function satisfies that for all $i,j\in V$,
			\begin{enumerate}
				\item either $w(i,j)=0$ or $w(i,j)=w$, and
				\item $w(i,j)=w(j,i)$, and
				\item $w(i,i)=0$.
			\end{enumerate}
		\end{dfn}
		\end{samepage}
    
    We regard uniform weight graphs as being undirected. As such, we refer to their strongly connected components simply as connected components (when applying \cref{lem:restrictiveStructure}, for instance). For uniform weight graphs, we say that vertices $i$ and $j$ are {\bf adjacent} if and only if $w(i,j)>0$, in which case $w(i,j)=w$.

    The recurrence \eqref{eqn:propCdfn} simplifies to the following in the case of a positive-weight word in a uniform weight graph
    \begin{equation}\label{eq:recUnif}
      B(x)=wB(\hat x_1) + w^2\sum_{i=2}^{n-1} B(\hat x_i) + wB(\hat x_n),\qquad x\in V^n.
    \end{equation}

    It is convenient to fix some notation regarding alternating words. For any positive integer $n$ that may be even or odd, we define $(ab)^{n/2}$ to be the unique alternating word in $\{a,b\}^n$ beginning with $a$.

		\begin{lemma}\label{lem:calculation} If $w(a,b)>0$ and $w(b,v)>0$, then
			\begin{equation}\label{eq:altCalc}
        B\left((ab)^{n/2}\ v\right)=\begin{cases}
          (2w)^n,&w(a,v)=0\\
          \frac{2w^2(w+w^2)^{n-1}-(2w)^n}{w-1},&w(a,v)=w\text{ and }w\not=1\\
          2^{n-1}(n+1),&w(a,v)=w\text{ and }w=1.
        \end{cases}
      \end{equation}
		\end{lemma}
		\begin{proof}
      First suppose that $w(a,v)=0$. By \eqref{eq:recUnif} we have that
      \[
        B\left((ab)^{n/2}\ v\right)=w\cdot B\left((ba\right)^{(n-1)/2}\ v)+
        w\cdot B\left((ab)^{n/2}\right).
      \]
      Applying this inductively yields the first case of \eqref{eq:altCalc}.

      Next, suppose that $w(a,v)=w$ and $w\not=1$. Again by \eqref{eq:recUnif},
      \begin{align*}
        B\left((ab)^{n/2}\ v\right)&=w\cdot B\left((ba)^{(n-1)/2}\ v\right)+
        w^2\cdot B\left((ab)^{(n-1)/2}\ v\right)+
        w\cdot B\left((ab)^{n/2}\right)\\
        &=(w+w^2)\cdot B\left((ba)^{(n-1)/2}\ v\right)+w\cdot (2w)^{n-1}.
      \end{align*}
      Now the second case of \eqref{eq:altCalc} follows by induction, starting from the base case $B(av)=2w$. The final case of \eqref{eq:altCalc} follows from a simplification of the previous calculation.
		\end{proof}

    \begin{lemma}\label{lem:propUnif}
      If $G$ is a non-empty connected uniform weight graph satisfying property (EC), then the
      following conditions hold.
      \begin{enumerate}
      \item
      The graph $G$ is $d$-regular for some $d\geq 1$.
      \item
      For some $t\geq 0$, there are $t$ triangles on every edge of $G$.
      \end{enumerate}
    \end{lemma}
    \begin{proof}
      Let $w$ be the common weight of the edges in $G$. Fix a vertex $b$. Let $d$ denote the degree of $b$. By hypothesis, there exists an edge $(a,b)$. Let $t$ denote the number of triangles on the edge $(a,b)$. First suppose that $w\not=1$. By \Cref{lem:calculation},
      \[
        \sum_{v}B\left((ab)^{n/2}\ v\right)=(2w)^{n}(d-t)+
        \left[\frac{2w^2(w+w^2)^{n-1}-(2w)^n}{w-1}\right]t.
      \]
      Recall that $B\bigl((ab)^{n/2}\bigr)=(2w)^{n-1}$. Thus by \eqref{eqn:propCdfn}, it follows that
      \begin{equation}\label{eqn:eqn_d_t_Cn}
        C_n=2w(d-t)+\left[\frac{w(\frac{w+1}{2})^{n-1}-1}{w-1}\right](2wt)
      \end{equation}
      for all sufficiently large $n$ if $w\not=1$. Likewise if $w=1$, for all sufficiently large $n$ we have that $C_n=2(d-t)+(n+1)t.$ In either case, both $d$ and $t$ are determined by the values of $C_n$ for $n$ sufficiently large. Thus $d$ and $t$ take the same value, for all vertices $b$ and all edges $(a,b)$.%
    \end{proof}

		Consider the graph that appears in \cref{fig:coat}.

		\begin{dfn}\label{dfn:kite}
			An undirected graph on the vertices $a,b,c,d$ is a {\bf kite} if $(a,b,c)$
      form a triangle and $d$ is adjacent to $a$ and to no other vertex.
		\end{dfn}
		\begin{figure}
			\centering
			\includegraphics{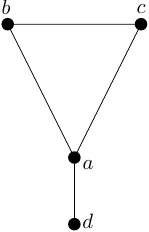}
			\caption{The kite $(abc;d)$\label{fig:coat}}
		\end{figure}
		\begin{lemma}\label{lem:dt}
			Suppose that $G$ has uniform weight and satisfies property (EC). Then
      there is no kite which occurs as an induced subgraph of $G$.
		\end{lemma}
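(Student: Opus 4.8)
The plan is to argue by contradiction. Suppose an induced kite $(abc;d)$ sits inside $G$, so that $abc$ is a triangle, $d\sim a$, and $d$ is adjacent to neither $b$ nor $c$. By \cref{lem:propUnif}, property (C) forces $G$ to be $r$-regular and to carry a fixed number $t$ of triangles on every edge. The core of the proof will be a rigidity statement, extracted from property (C) at word-length three, about triples of vertices forming an \emph{open path} --- a path whose two endpoints are non-adjacent.

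First I would take an arbitrary word $x_1x_2x_3$ with $x_1\sim x_2\sim x_3$ but $x_1\not\sim x_3$, apply \cref{cor:insertTilde} to it, and evaluate both sides using the length-three and length-four formulas for $\tilde B$ recorded in \cref{fig:rec}. Because $w(x_1,x_3)=0$, those formulas collapse sharply, and after summing over the $r$ neighbours $v$ of $x_3$ and identifying the contribution of the edge $x_2x_3$ with $t$, the identity reads
\[
	8wr+6w^2 t+2w^3 t' = 4C_3,
\]
where $t'$ denotes the number of common neighbours of $x_1,x_2,x_3$. The only path-dependent quantity here is $t'$, so $t'$ takes the same value for every open path. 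Evaluating this common value on the degenerate open path $x_1x_2x_1$ --- which is an admissible word ($w(x_1x_2x_1)=w^2>0$), is open since $w(x_1,x_1)=0$, and whose three symbols have exactly $t$ common neighbours --- pins it down: every open path $x_1x_2x_3$ has exactly $t$ common neighbours, the same number as the single edge $x_1x_2$.

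The contradiction is then immediate. Apply the rigidity statement to the open path $d\,a\,b$, which is admissible because $d\sim a\sim b$ and open because $d\not\sim b$: its three symbols $d,a,b$ have exactly $t$ common neighbours. But the common neighbours of $d,a,b$ form a subset of the $t$ common neighbours of the edge $ab$, so the two sets must coincide. Since $c$ is a common neighbour of $a$ and $b$, it follows that $c\sim d$, contradicting the defining property of the kite.

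The only non-routine point is recognizing that the ``universal'' constant $t'$ should be tested against the degenerate word $x_1x_2x_1$; without that substitution one is merely left with a linear relation among $w,r,t,t'$ carrying no information. The remainder --- expanding $\tilde B(x_1x_2x_3v)$ via \cref{fig:rec} and collecting the sums over $v$ --- is bookkeeping, and I foresee no serious obstacle.
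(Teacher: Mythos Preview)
Your argument is correct and uses the same underlying computation as the paper's proof: both apply \cref{cor:insertTilde} to length-three words with non-adjacent endpoints and expand via the $n=4$ formula of \cref{fig:rec}. The packaging differs slightly. The paper compares the two specific words $bab$ and $dab$ directly, forming $Q=\sum_v[\tilde B(babv)-\tilde B(dabv)]w(b,v)$; since both words end in $b$, the sums run over the same neighbour set and the difference collapses termwise to $2w(a,v)w(b,v)[w(b,v)-w(d,v)]$, which is nonnegative and strictly positive at $v=c$, while property~(C) forces $Q=0$. Your version instead isolates the structural invariant --- the common-neighbour count $t'$ of any open path equals $t$ --- and then applies it to $dab$; this is a pleasant intermediate rigidity statement, at the cost of the extra step of evaluating on the degenerate word $x_1x_2x_1$. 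The two routes are equivalent in content: unwinding the paper's $Q$ one finds $Q=2w^3(t-t')$ with $t'$ the common-neighbour count of $d,a,b$, so the paper's ``$Q=0$ and $Q>0$'' is exactly your ``$t'=t$ yet $c$ witnesses $t'<t$''.
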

		\begin{proof}
			Suppose to the contrary that $G$ contains a kite $(abc;d)$. For all
      $n\geq 3$, we will construct words $x$ and $y$ satisfying
			\[
				\sum_{v\in V}\frac{B(xv)}{B(x)}>\sum_{v\in V}\frac{B(yv)}{B(y)},
			\]
			from which it will follow that $G$ cannot satisfy property (EC).

      Let $x=(ba)^{n/2}$ and let $y=d (ab)^{(n-1)/2}$. Simple casework reveals
      that for each $\sigma\in S_n$ we have $w(x;\sigma)=w(y;\sigma)$, and
      consequently $B(x)=B(y)$. Note that this common value is positive
      (by \cref{lem:calculation}, for instance). Thus we need only establish
      that \[\sum_{v}B(xv)>\sum_v B(yv).\] First observe that we have the weaker
      inequality
      \begin{equation}\label{eq:weak}
        \sum_{v}B(xv)\geq \sum_v B(yv).
      \end{equation}
      Indeed, each $\sigma\in S_n+1$ with $w(yv;\sigma)>0$ is seen to satisfy
      $w(xv;\sigma)>0$ as well. Since the graph has uniform weight, it
      therefore follows that $w(xv;\sigma)\geq w(yv;\sigma)$ and the claim
      follows upon summation.

			To obtain strict inequality, observe that if $\sigma\in S_{n+1}$ satisfies $\sigma(\{1,2\})=\{1,n+1\}$ then $w(xc;\sigma)>w(yc;\sigma)=0$. When combined with the previous inequality, it follows that $B(xc)>B(yc)$.
		\end{proof}

		\subsection{Complete multipartite graphs}\label{subsection:modClique}
		In this section we relate property (EC) for complete
    multipartite graphs to property (EC) for complete graphs. This allows us to
    extend results of Holroyd and Liggett \cite{holroyd60} to handle complete
    multipartite graphs.

		We use the following notation: $K_q$ denotes the complete graph on
    $q$ vertices, $K_{r,\ldots,r}$ denotes the complete multipartite graph
    with $q$ parts each of size $r$, and $w\cdot K_{r,\ldots,r}$ denotes the
    uniform weight graph in which each edge of the corresponding complete multipartite graph has weight $w>0$. In all cases, we take the vertex set to be $[qr]:=\{1,\ldots,qr\}$ (with $r=1$ in the case of $K_q$), and we take the edge set to be $\{(i,j)\colon i\not\equiv j\mod q\}$. In the case of $w\cdot K_{r,\ldots,r}$, the weight function is given by $w(i,j)=w\cdot\one{i\not\equiv j\mod q}$.


		Note that the graph $K_{r,\ldots,r}$ is a Tur\'an graph
    \cite{bollobas1998modern}.

		\begin{lemma}\label{prop:turan}
      For any $r\geq 1$ and $k\geq 0$, the graph $w\cdot K_{r,\ldots,r}$ satisfies property (EC) and its associated insertion process is $k$-dependent if and only if the same holds for $w\cdot K_q$.
		\end{lemma}
		\begin{proof}
			Consider the mapping $f\colon [qr]\to [q]$ given by $f(i)=i\mod q$. For
      any word $x\in [qr]^n$, let $f(x)$ denote the word
      $f(x_1)f(x_2)\cdots f(x_n)\in [q]^n$. Observe that
			\[w(i,j)=w\bigl(f(i),f(j)\bigr)\quad \forall i,j\in [qr],\]
			from which it follows that $B(x)=B\bigl(f(x)\bigr)$. Thus the
      following are equivalent:

			\begin{enumerate}
				\item $\ds\sum_{v\in [qr]}B(xv)=rC_{n}B(x)$
				\item $\ds\sum_{v'\in [q]}B\bigl(f(x) v'\bigr)=C_{n} B\bigl(f(x)\bigr)$
			\end{enumerate}
			By \eqref{eqn:propCdfn}, it follows that $w\cdot K_q$ satisfies property
      (EC) if and only if $w\cdot K_{r,\ldots,r}$ satisfies property (EC).

			Similarly, the following are equivalent:
			\begin{enumerate}
				\item $\ds \sum_{W\in [qr]^k}B(x\; W\; y)=r^kC_{nm}B(x)B(y)$
				\item $\ds \sum_{W'\in [q]^k}B\bigl(f(x)\; W'\; f(y)\bigr)=
        C_{nm} B\bigl(f(x)\bigr) B\bigl(f(y)\bigr)$
			\end{enumerate}
			The result now follows by \cref{lem:kDep}.
		\end{proof}

    \begin{lemma}\label{lem:wIs1}
      Suppose that $w\not=1$ and $q\geq 3$. Then the graph $w\cdot K_q$ does
      not satisfy property (EC).
    \end{lemma}
    \begin{proof}
    Consider an integer $n\geq 3$. Fix distinct vertices $a,b,c\in K_q$.
    Keeping in mind our notation for alternating words, set
      \[
        x=(ba)^{n/2}\in \{a,b\}^n
        ,\qquad y=c\ (ab)^{(n-1)/2}\in \{a,b,c\}^n
      \]
      (regardless of the parity of $n$). Consider the quantities
      \[
        Q_n=\sum_{v\in K_q} \frac{B(xv)}{B(x)},\qquad R_n=\sum_v
        \frac{B(yv)}{B(y)}.
      \]
      We will prove by induction on $n$ that $Q_n>R_n$ for all $w>1$ and
      $Q_n<R_n$ for all $w\in (0,1)$.

      From \eqref{eq:recUnif} we deduce the recurrences $B(xv)=wB(\hat x_1v)+w^2B(\hat x_nv)+wB(x)$ and
      $B(yv)=wB(\hat y_1v)+w^2B(\hat y_2v)+w^2B(\hat y_nv)+wB(y)$, yielding that
      \begin{align*}
        Q_n&=\left(\frac{w+1}{2}\right)Q_{n-1}+qw-w(w+1)\\
        R_n&=\frac{wB(\hat x_1)}{B(y)}Q_{n-1}+\frac{2w^2
        B(\hat y_2)}{B(y)}R_{n-1}+qw-w(w+1).
      \end{align*}

      Subtracting the previous equations and substituting the recurrence
      $B(y)=wB(\hat y_1)+w^2B(\hat y_2)+wB(\hat y_n)$ yields that $2B(y)(Q_n-R_n)/w$ equals
      \begin{equation}\label{eqn:recIneq}
        \left[(w-1)B(\hat x_1)+(w+1)^2B(\hat y_2)\right]Q_{n-1}-
        4wB(\hat y_2)R_{n-1}
      \end{equation}
      When $w>1$, we leave out the first term, using $(w+1)^2\geq 4w$ to obtain
      \[
        2B(y)(Q_n-R_n)/w>4w(Q_{n-1}-R_{n-1}).
      \]
      Since the right side vanishes for $n=3$, it follows that $Q_n>R_n$ by
      induction.

      Next suppose that $w<1$. This case requires a tighter bound.
      We begin by establishing that for all $m\geq 3$,
      \[
        B\left((ba)^{m/2}\right)>(1-w)B\left(c\ (ba)^{(m-1)/2}\right).
      \]
      Indeed, we deduce the bound inductively from
      \begin{align*}
        B\left((ba)^{m/2}\right)&=2wB\left((ab)^{(m-1)/2}\right)\\
        &=[(1-w)w+w^2+w]B\left((ab)^{(m-1)/2}\right)\\
        (\text{induction})\qquad &> (1-w)\biggl[w B\left((ab)^{(m-1)/2}\right)
        + w^2 B\left(c\ (ba)^{(m-2)/2}\right)\\
        &\qquad  + w B\left(c\ (ba)^{(m-2)/2}\right)\biggr]\\
        &= (1-w)B\left(c\ (ab)^{(m-1)/2}\right).
      \end{align*}
      Consequently
      \begin{equation}\label{eqn:auxBound}
        (1-w)B(\hat x_1)>(1-w)^2B(\hat y_2)
      \end{equation}
      \noindent Now we plug the bound \eqref{eqn:auxBound} into
      \eqref{eqn:recIneq} to obtain a bound which is suitable for induction:
      \begin{align*}
      2B(y)(R_n-Q_n)/w&=4wB(\hat y_2)R_{n-1}\\
      &\qquad +
\left[(1-w)B(\hat x_1)-(w+1)^2B(\hat y_2)\right]Q_{n-1}\\
\text{using \eqref{eqn:auxBound}}\qquad &>4wB(\hat y_2)(R_{n-1}-Q_{n-1}).
      \end{align*}
      As before, when $n=3$ the right side vanishes. Thus it follows by
      induction that if $0<w<1$, we have $R_n>Q_n$ for all $n\geq 3$.
      Combined with the previous case, we conclude that when $w\not=1$ and
      $q\geq 3$, the graph $w\cdot K_q$
      does not satisfy property (EC).
    \end{proof}

		Combining several results allows us to determine which uniform weight complete multipartite graphs satisfy property (EC) and have a finitely dependent associated insertion process.
		\begin{lemma}\label{lem:stringing}
			The graph $w\cdot K_{r,\ldots,r}$ for $q\geq 3$ and $r\geq 1$ has
      property (EC) and has a $k$-dependent associated insertion process if and
      only if $w=1$ and either: $q=3$ and $k\geq 2$; or $q=4$ and $k\geq 1$.
		\end{lemma}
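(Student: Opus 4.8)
The plan is to reduce the entire statement to the behavior of the complete graph $K_q$, which is already understood by Holroyd and Liggett, and then read off the answer. First I would invoke \cref{prop:turan}: the insertion algorithm on $w\cdot K_{r,\ldots,r}$ is consistent and yields a $k$-dependent process if and only if the same holds for $w\cdot K_q$. Thus the part size $r$ is irrelevant, and it suffices to analyze the weighted complete graph $w\cdot K_q$ for $q\geq 3$.

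For the ``only if'' direction I would assume $w\cdot K_q$ has property (C) — equivalently, by \cref{lem:insert}, a consistent family of marginals — and yields a $k$-dependent process. Since $q\geq 3$, \cref{lem:unif} immediately forces $w=1$, so the graph is the unweighted $K_q$. Then the classification of Holroyd and Liggett \cite[Proposition 13]{holroyd60}, namely that the insertion process on $K_q$ is $k$-dependent exactly when $q=3$ and $k\geq 2$, or $q=4$ and $k\geq 1$, pins down the admissible pairs $(q,k)$, and these transfer back to $w\cdot K_{r,\ldots,r}$ via \cref{prop:turan}.

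For the ``if'' direction I would start from $w=1$ together with one of the two listed pairs $(q,k)$. Then $w\cdot K_q=K_q$ has property (C) by \cite[Proposition 10]{holroyd60}, hence a consistent family of marginals by \cref{lem:insert}, and its insertion process is $k$-dependent by \cite[Proposition 13]{holroyd60}. Transporting both facts back along \cref{prop:turan} yields consistency and $k$-dependence for $w\cdot K_{r,\ldots,r}$, completing the proof.

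I do not anticipate a genuine obstacle: as the name of the lemma suggests, this is a matter of stringing together \cref{prop:turan}, \cref{lem:unif}, \cref{lem:insert}, and the known analysis of $K_q$. The only points demanding care are translating consistently between the two formulations of the hypothesis (``consistent family of measures'' versus ``property (C)''), for which \cref{lem:insert} is the bridge, and citing the Holroyd--Liggett results in their correct range $q\geq 2$.
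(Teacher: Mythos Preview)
Your proposal is correct and follows exactly the paper's argument: reduce via \cref{prop:turan} to $w\cdot K_q$, apply \cref{lem:unif} to force $w=1$, and then invoke Propositions~10 and~13 of \cite{holroyd60}. The paper's proof is simply a terser version of what you wrote, omitting the explicit separation into ``if'' and ``only if'' and the mention of \cref{lem:insert}.
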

		\begin{proof}
			By \cref{prop:turan}, it suffices to consider $w\cdot K_q$. By
      \cref{lem:wIs1}, $w=1$. Applying Propositions 10 and 13 of
      \cite{holroyd60}, the insertion process on $K_q$ is $k$-dependent if
      and only if $q=3$ ($k\geq 2$) or $q=4$ ($k\geq 1$).
		\end{proof}
	\subsection{Extension to uniform weight graphs}\label{subsection:uniqueUnif}
		We now combine the results of
    \Cref{subsection:modClique,subsection:propUnif} to extend
    the conclusion of \cref{lem:stringing} to all uniform weight graphs.
    We establish that the only uniform weight graphs satisfying property (EC)
    that have a $k$-dependent associated
    insertion process are $K_{r,r,r}$ and $K_{r,r,r,r}$, and unions thereof with isolated vertices.
    Note that these graphs have weight $w=1$, even though we allow $w>0$ to be arbitrary a priori. These graphs
    are closely related to the graphs $K_3$ and $K_4$ corresponding to the
    colorings discovered by Holroyd and Liggett; in fact, the graphs $K_{r,r,r}$
    and $K_{r,r,r,r}$ are obtained from $K_3$ (resp. $K_4$) by replacing each
    vertex with $r$ copies of itself. Similarly, the insertion process on $K_{r,r,r}$ (resp. $K_{r,r,r,r}$) is obtained from the corresponding process on $K_3$ (resp. $K_4$) by replacing each instance of vertex $i$ with an i.i.d.\ choice of one of its $r$ copies in $K_{r,r,r}$ (resp. $K_{r,r,r,r}$).

		The following graph-theoretic lemma allows us to reduce the general
    uniform weight case to that of the complete multipartite graphs treated in
    \Cref{subsection:modClique}. We will use \cref{lem:kiteFree} to show that
    any uniform weight graph either contains a kite, or is complete
    multipartite.

		\begin{lemma}\label{lem:kiteFree}
			Let $G$ be a kite-free connected loopless graph containing a triangle
      $abc$. Then every vertex $d\in V$ is adjacent to at
      least two of $\{a,b,c\}$.
		\end{lemma}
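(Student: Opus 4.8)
The plan is to argue by contradiction, using a counterexample that is minimal with respect to distance from the triangle $abc$. Assume that some vertex of $G$ is adjacent to at most one of $\{a,b,c\}$, and among all such vertices choose $w$ whose graph distance to the set $\{a,b,c\}$ is as small as possible; this distance is finite because $G$ is connected. Since each of $a,b,c$ is adjacent to the other two, we have $w\notin\{a,b,c\}$. The goal is then to exhibit an induced kite in $G$, contradicting the hypothesis.

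First I would dispose of the case that $w$ is adjacent to exactly one of $a,b,c$, say to $a$ alone. In that case the four vertices $a,b,c,w$ induce the triangle $abc$ together with the single additional edge $aw$ and nothing else, which is precisely a kite $(abc;w)$ in the sense of \cref{dfn:kite} --- the desired contradiction. The remaining case is that $w$ is adjacent to none of $a,b,c$, so its distance $\ell$ to $\{a,b,c\}$ is at least $2$; I would fix a shortest path $w=u_0,u_1,\dots,u_\ell$ with $u_\ell\in\{a,b,c\}$. The vertex $u_1$ satisfies $\mathrm{dist}(u_1,\{a,b,c\})\le\ell-1<\ell$, so by the minimality in the choice of $w$ the vertex $u_1$ cannot be adjacent to at most one of $\{a,b,c\}$; hence $u_1$ is adjacent to at least two of them, say to $a$ and to $b$, and moreover $u_1\notin\{a,b,c\}$ since its distance to that set is positive. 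Now the four vertices $a,b,u_1,w$ induce: the edges $ab$, $au_1$, $bu_1$ (forming a triangle), the edge $u_1w$ (since $u_1$ and $w$ are consecutive on the path), and no edge joining $w$ to $a$ or to $b$ (since $w$ is adjacent to none of $a,b,c$). That is exactly the kite $(abu_1;w)$, again a contradiction. Since both cases are impossible, every vertex must be adjacent to at least two of $\{a,b,c\}$.

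The main obstacle --- indeed the only step requiring any care --- is the bookkeeping in the second case: one must check that $u_1$ is a genuinely new vertex (distinct from $a,b,c$ and from $w$) before invoking it, and must be careful to observe that the triangle occurring inside the induced kite is $abu_1$, not the original triangle $abc$. Everything else is immediate from connectedness, the choice of $w$, and \cref{dfn:kite}.
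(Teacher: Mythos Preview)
Your proof is correct and follows essentially the same shortest-path-plus-kite-detection strategy as the paper. The only organizational difference is that you build the minimality into the choice of the counterexample vertex $w$ itself (so that $u_1$ is automatically adjacent to at least two of $a,b,c$), whereas the paper fixes an arbitrary non-adjacent $d$, takes a minimal path, and then does a small case split at the triangle end; your packaging avoids that redundant sub-case but the underlying idea is identical.
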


		\begin{figure}[ht]
			\centering
			\includegraphics{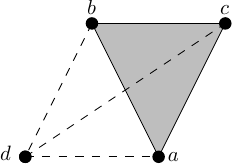}
			\caption{The problematic triangle $abc$ in \cref{lem:kiteFree}
      \label{fig:badTri}}
		\end{figure}
		\begin{proof}
			By \cref{lem:dt}, no vertex $d\in V$ is adjacent to exactly one of
      $abc$. Hence it suffices to show that every vertex $d$ is adjacent to
      $abc$.

			Suppose to the contrary that some $d\in V$ is non-adjacent to $abc$
      (\cref{fig:badTri}). Choose a minimal path joining $d$ to $abc$. We show
      that a kite is present near the intersection of the path with $abc$,
      from which we obtain the desired contradiction.

			\begin{figure}[ht]
				\centering
				\includegraphics{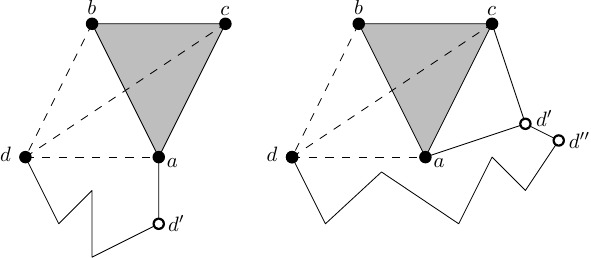}
				\caption{Reaching a contradiction\label{fig:pathToTri}}
			\end{figure}

			Let $d'$ denote the path vertex adjacent to $abc$. There are two cases
      to consider: either $d'$ is adjacent to a single vertex of $abc$
      (left half of \cref{fig:pathToTri}), or it is adjacent to more than
      one vertex (right half of \cref{fig:pathToTri}).

			If $d'$ is adjacent to a single vertex, then $(abc;d')$ is a kite. Now
      suppose that $d'$ is adjacent to more than one vertex. Without loss of
      generality, suppose that $d'$ is adjacent to $a$ and $c$. Let $d''$
      denote a neighbor of $d'$ on the path from $d$ to $d'$. By minimality
      of the path, $d''$ is non-adjacent to $abc$. Consequently $(d'ac;d'')$
      is a kite.
		\end{proof}

		We are now in a position to prove \cref{thm:second}.
		\begin{proof}[Proof of \cref{thm:second}]
			Suppose that $G$ is a uniform weight graph with property (EC)
      and suppose
      that the insertion process associated to $G$ is $k$-dependent. We will
      deduce that either $G=K_{r,r,r,r}$ and $k\geq 2$, or $G=K_{r,r,r}$ and
      $k\geq 1$, or $G$ is a disjoint union of one of these graphs with a
      collection of isolated vertices.

      Since uniform weight graphs are undirected,
      \cref{lem:restrictiveStructure} takes on a simpler form in the present
      context. Indeed, it implies that both the (EC) property and the insertion
      process are unchanged by deletion of isolated vertices, and moreover it
      implies that the resulting graph is connected. Hence it suffices to
      consider the connected case.

			We show in this case that $G$ is complete multipartite. Consider the
      relation
      \begin{equation}\label{eq:rel}
        \{(i,j)\in V^2\colon w(i,j)=0\}.
      \end{equation}
      This relation is reflexive and symmetric by definition of a uniform weight graph. Once we establish transitivity,
      it will follow that the graph $G$ is complete multipartite with
      partite sets given by the equivalence classes of this relation.

			Suppose to the contrary that transitivity did not hold. Then there would
      exist vertices $a,b,d$ such that $w(a,b)>0$ yet $w(a,d)=w(b,d)=0$. By \cref{thm:lackTri} and \cref{lem:propUnif}, there are
      $t\geq 1$
      triangles on every edge. In particular, we may complete the edge
      $(a,b)$ into a triangle $abc$. By \cref{lem:dt}, $G$ lacks kites,
      and by assumption it is connected. Moreover by definition of uniform weight, $G$ is loopless.

			Thus we have verified the conditions of \cref{lem:kiteFree}.
      Consequently, the vertex $d$ is adjacent to at least two of $a,b,c$.
      This contradicts the hypothesis that $w(a,d)=w(a,b)=0$. Thus the relation \eqref{eq:rel} is transitive, and we deduce that it is an equivalence relation.

			Decompose the vertex set into equivalence classes of \eqref{eq:rel}. Then $G$ is complete multipartite, with partite sets are given by the equivalence classes of \eqref{eq:rel}. By \cref{lem:propUnif}, $G$ is a regular graph and thus the parts have
      equal sizes. Thus $G=w\cdot K_{r,\ldots,r}$, for some $w>0$ and $r\geq 1$.

			Applying \cref{thm:lackTri} again, we see that $G$ contains a triangle
      and therefore $q\geq 3$. The result now follows when $G$ has property (EC) by \cref{lem:stringing}.

      Finally, observe that if $G$ also satisfies property (C), then there can be no isolated vertices by \cref{lem:restrictiveStructure}(iv).
		\end{proof}

\section{Complete weighted digraphs with property (C)}
  \label{section:arbWeight}
	The results in this section apply to loopless complete weighted digraphs satisfying property (C). That is, the digraphs under consideration satisfy ${w(i,i)=0}$ and ${w(i,j)>0}$ for all distinct vertices $i$ and $j$, as well as property (C). We will establish \cref{thm:third}, which states that the only
  such graphs for which the associated insertion process is $k$-dependent are
  the (unweighted) graphs $K_3$ (for $k\geq 1$) and $K_4$ (for $k\geq 2$). We
  will establish this result by reducing it to the case of a
  uniform weight graph and applying results from \Cref{section:unifWeight}.

	For distinct indices $i,j\in V$, we define the quantity
	\[
		T_n(i,j)=\sum_{v\in V} w(i,v)^{\ceil{n/2}}w(j,v)^{\fl{n/2}},
	\]
  where we use the convention $0^0:=1$.
	\begin{lemma}\label{lem:invariant}
    Fix a loopless complete weighted digraph satisfying property (C) and fix an integer $n\geq 1$. Then the value of $T_n(i,j)$ is constant over all vertices $i$ and $j$ with $w(i,j)>0$.
	\end{lemma}

	\begin{proof}
		Let $x\in \{i,j\}^n$ be the unique alternating word ending in $i$. Recall from \cref{dfn:B} that
    \begin{equation}\label{eq:sumOfSigma}
      B(xv)=\sum_{\sigma\in S_n}w(xv;\sigma).
    \end{equation}
    Since $w(i,i)=w(j,j)=0$ by assumption, for all permutations with $w(xv;\sigma)>0$ we have that
		\begin{equation}\label{eq:sigmaBuildProd}
			w(xv;\sigma)=w(x) w(i,v)^{\ceil{\ell/2}}w(j,v)^{\fl{\ell/2}},
		\end{equation}
		where $\ell$ is given by the formula
    \begin{equation}\label{eq:ellFormula}
       \ell=n+1-\max_{t<\sigma^{-1}(n+1)} \sigma(t).
    \end{equation}
    This is straightforward to verify from the definitions and \Cref{fig:pruned}.


		\begin{figure}
			\centering
			\def\svgwidth{0.5\textwidth}
			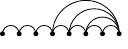
      \vspace{1.5mm}
			\caption{Let $x$ be as in the proof of \Cref{lem:invariant}. For $\sigma\in S_{n+1}$, let $\ell$ be as in \eqref{eq:ellFormula}. Consider the nearest neighbor graph on $\{1,\ldots,n+1\}$, except we modify vertex $n+1$ to have degree $\ell$. This graph is drawn above with vertex $i$ labeled with the $i^{th}$ symbol of the word $xv$. Then $w(xv;\sigma)=\prod_{e=(i,j)}w\bigl((xv)_i,(xv)_j\bigr)$, where the product is taken over edges of the graph above.}\label{fig:pruned}
		\end{figure}

		By \eqref{eq:ellFormula}, we see that $\ell$ ranges over $\{1,\ldots,n\}$ as $\sigma$ ranges over $S_{n+1}$. Thus substituting \eqref{eq:sigmaBuildProd} into \eqref{eq:sumOfSigma} implies that there are integers $d_1,\ldots,d_n>0$ whose values depend only on $n$ such that 
		\al{
			B(xv)&=w(x)\sum_{\ell=1}^n d_{\ell}\;
      w(i,v)^{\ceil{\ell/2}}w(j,v)^{\fl{\ell/2}}.
		}
		Summing over all vertices $v\in V$, we obtain
		\al{
			\sum_{v\in V}B(xv)&=w(x)\sum_{\ell=1}^nd_{\ell}\; T_{\ell}(i,j).
		}

		By \eqref{eqn:propCdfn} we have that $\sum_{v\in V}B(xv)=C_nB(x).$ Furthermore $B(x)=2^{n-1}w(x)$, by \cref{lem:lackTri} applied to the subgraph induced by $\{i,j\}$. As $w(x)>0$, we have that
		\[
			\sum_{\ell=1}^nd_{\ell}\; T_{\ell}(i,j)=2^{n-1}C_n.
		\]
    Since $d_1,\ldots,d_n>0$, we may explicitly solve this system of equations to obtain that
    $$
      \begin{pmatrix}T_1(i,j)\\\vdots\\T_n(i,j)
      \end{pmatrix}=
      \begin{pmatrix}
        d_1 & 0 & \cdots & 0\\
        d_1 & d_2 & \cdots & 0\\
        d_1 & d_2 & \ddots & 0\\
        d_1 & d_2 & \cdots & d_n\\
      \end{pmatrix}^{-1}
      \begin{pmatrix}C_1\\2C_2\\\vdots\\2^{n-1}C_n
      \end{pmatrix}.
    $$
		In particular, it follows that $T_n(i,j)$ is independent of the pair $(i,j)$.
	\end{proof}
	In light of \cref{lem:invariant}, we write $T_n$ in place of $T_n(i,j)$
  from now on.
	\begin{lemma}\label{lem:offdiagUnif}
		For all pairs of distinct indices $(i,j)$ and $(i',j')$, we have
    $w(i,j)=w(i',j')$.
	\end{lemma}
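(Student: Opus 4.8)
The plan is to use \cref{lem:invariant} as a black box. Write $T_n$ for the common value of $T_n(i,j)$. Two immediate observations drive the argument. First, $T_1(i,j)=\sum_{v\in V}w(i,v)$ (since $w(j,v)^0=1$), so the common value $T_1$ equals the weighted out-degree of every vertex; call it $D$. Consequently $w(i,j)=D-\sum_{v\notin\{i,j\}}w(i,v)$, and the lemma reduces to showing that $\sum_{v\notin\{i,j\}}w(i,v)$ does not depend on the ordered pair $(i,j)$. Second, every $T_n$ weights the terms $w(i,v)$ only by powers of the products $w(i,v)w(j,v)$; the heart of the proof will be to \emph{decouple} $w(i,v)$ from this product weighting.

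To that end I would first exploit the even-index data. For $m\geq 1$ we have $T_{2m}=\sum_{v\in V}\bigl(w(i,v)w(j,v)\bigr)^m$, and since $G$ is loopless the terms $v=i$ and $v=j$ vanish, leaving $T_{2m}=\sum_{v\notin\{i,j\}}c_v^m$ where $c_v:=w(i,v)w(j,v)>0$ (here strict positivity of off-diagonal weights is used). Thus the first $|V|-2$ power sums of the size-$(|V|-2)$ multiset $\{c_v:v\notin\{i,j\}\}$ are fixed constants, so by Newton's identities the multiset itself is independent of $(i,j)$; record its distinct values $\gamma_1,\dots,\gamma_r$ with multiplicities $\mu_1,\dots,\mu_r$.

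Next I would use the odd-index data to split $\sum_{v\notin\{i,j\}}w(i,v)$ along the level sets of $c_v$. For the pair $(i,j)$, set $S_k=\{v\notin\{i,j\}\colon c_v=\gamma_k\}$ (so $|S_k|=\mu_k$) and $\beta_k=\sum_{v\in S_k}w(i,v)$. Looplessness again kills the terms $v=i,j$, giving for every $m\geq 1$ the identity $T_{2m+1}=\sum_{v\notin\{i,j\}}w(i,v)\,c_v^m=\sum_{k=1}^r\gamma_k^m\,\beta_k$. Taking $m=1,\dots,r$ yields a linear system whose coefficient matrix $(\gamma_k^m)_{m,k}$ is invertible, since the $\gamma_k$ are distinct and positive (factor out $\gamma_k$ from each column to expose a Vandermonde matrix). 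Hence $(\beta_1,\dots,\beta_r)$ is determined by $T_3,\dots,T_{2r+1}$ and $\gamma_1,\dots,\gamma_r$ alone, so it is independent of $(i,j)$. As the $S_k$ partition $V\setminus\{i,j\}$, we conclude $\sum_{v\notin\{i,j\}}w(i,v)=\sum_{k=1}^r\beta_k$, a constant, whence $w(i,j)=D-\sum_k\beta_k$ is independent of $(i,j)$, as required. (The degenerate case $|V|=2$ makes $r=0$ and the claim follows from $T_1$ alone.)

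The main obstacle is exactly the decoupling in the previous paragraph: \cref{lem:invariant} supplies only the $c_v$-weighted moments of the sequence $\bigl(w(i,v)\bigr)_v$, never its plain sum, so one is forced to stratify by the value of $c_v$. It is the combination of \emph{power sums determine a multiset} (to see the strata are common to all pairs) with a \emph{Vandermonde inversion} (to see the $w(i,v)$-mass of each stratum is common to all pairs) that bridges the gap; note also that both standing hypotheses are genuinely used, looplessness to discard the $v\in\{i,j\}$ terms and strict positivity to guarantee every $v\notin\{i,j\}$ lies in some stratum.
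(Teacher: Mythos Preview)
Your argument is correct and follows the same overall strategy as the paper: stratify $V\setminus\{i,j\}$ by the value of $c_v=w(i,v)w(j,v)$, show that both the stratum values and the $w(i,\cdot)$-mass on each stratum are determined by the sequence $\{T_n\}$, and conclude via $w(i,j)=T_1-\sum_{v\notin\{i,j\}}w(i,v)$.

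The implementation differs. The paper uses only the odd-index data $T_{2n+1}=\sum_\ell a_\ell z_\ell^n$ and recovers the pairs $(z_\ell,a_\ell)$ by an asymptotic peeling argument: $z_1=\lim_n T_{2n+1}^{1/n}$, then $a_1=\lim_n T_{2n+1}/z_1^n$, subtract and repeat. You instead split the work between parities: the even-index power sums $T_{2m}$ together with Newton's identities pin down the multiset $\{c_v\}$, and then a finite Vandermonde inversion on $T_3,\dots,T_{2r+1}$ recovers the stratum masses $\beta_k$. Your route has the virtue of being purely finitary linear algebra (no limits), and it makes explicit which finitely many $T_n$ suffice; the paper's peeling argument is shorter to state but invokes infinitely many $T_n$. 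Either way the content is the same moment-inversion for a finitely supported measure, so the two proofs are variations on a single idea.
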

	\begin{proof}
		Let $z_1>z_2>\cdots $ denote the set of distinct positive values attained
    by $w(i,v)w(j,v)$ as $v$ ranges over $V$. Let $V_{\ell}$ denote the
    vertices which contribute to $z_{\ell}$, given by
		\[
			V_{\ell}=\{v\in V\colon w(i,v)w(j,v)=z_{\ell}\},
		\]
		and let $a_{\ell}=\sum_{v\in V_{\ell}}w(i,v)$. Rewriting the expression
    for $T_{2n+1}$ yields
		\[
			T_{2n+1}=\sum_{\ell}a_{\ell}z_{\ell}^n.
		\]
		Next, observe that $z_1=\inf_{n\to\infty}(T_{2n+1})^{1/n}$ and $a_1=\inf_{n\to\infty}\frac{T_{2n+1}}{z_1^n}.$ Hence the parameters $a_1$ and $z_1$ can be reconstructed given the
    sequence $\{T_n\}$. Applying the same procedure to $T_{2n+1}-a_1z_1^n$
    allows us to iteratively reconstruct all of the parameters. Thus
    $a_{\ell}$ and $z_{\ell}$ are uniquely determined by the $\{T_n\}$, so
    they are independent of the choice $(i,j)$. Finally, observe that $\sum_{\ell}a_{\ell}=\sum_{v\not=j}w(i,v)$. Therefore
		\[
			w(i,j)=T_1-\sum_{\ell}a_{\ell},
		\]
		which shows that for $i\not=j$, the value of $w(i,j)$ is constant.
	\end{proof}

	Combining \cref{lem:offdiagUnif} with the results of
  \Cref{section:unifWeight} allows us to conclude \cref{thm:third}.

	\begin{proof}[Proof of \cref{thm:third}]
		Suppose that $G$ is a loopless weighted digraph such that $w(i,j)>0$ for
    all distinct vertices $i$ and $j$. Moreover, suppose that $G$ has
    property (C) and that the associated insertion process is $k$-dependent
    process. We show that either: $G=K_3$ and $k\geq 2$; or $G=K_4$ and
    $k\geq 1$.

		By \cref{lem:offdiagUnif}, the graph $G$ has uniform weight, and by
    \cref{lem:restrictiveStructure} it is strongly connected.
    Applying \cref{thm:second}, we conclude that either $G=K_{r,r,r}$
    and $k\geq 2$, or $G=K_{r,r,r,r}$ and $k\geq 1$. Since $w(i,j)>0$ for
    all $i\not=j$, it follows that $r=1$. Thus $G$ is a complete graph, and
    applying \cite[Proposition 13]{holroyd60} we deduce that $G$ is either
    $K_3$ (for $k\geq 2$) or $K_4$ (for $k\geq 1$).

		Conversely, by the main result of \cite{holroyd60} the graphs $K_3$ and
    $K_4$ are $k$-dependent for $k\geq 2$ and $k\geq 1$ respectively.
	\end{proof}

	\section{Shifts of finite type}\label{section:shifts}
		We turn to the proof of \cref{thm:shiftFinite}, which states that if the de Bruijn graph of a loopless shift of finite type satisfies property (EC), then the associated insertion process on the shift of finite type is not finitely dependent.

		\begin{proof}[Proof of \cref{thm:shiftFinite}]
			Let $G$ denote the de Bruijn graph of the shift of finite type and let
      $\{Y_{\ell}\}_{\ell\in \m Z}$ denote the insertion process associated
      to $G$. We write
			\[Y_{\ell}=\left(x_{\ell},\ldots,x_{\ell+n-1}\right)\in [q]^n.\]
			Since $\{Y_{\ell}\}$ is almost surely a path in $G$, the overlapping
      elements in adjacent tuples $Y_{\ell}$ and $Y_{\ell+1}$ almost surely
      coincide. We extend the insertion process from $G$ to the shift of
      finite type by considering the random sequence
      $\{x_{\ell}\}_{\ell\in \m Z}$.

      Suppose to the contrary that for some $k$, this process is $k$-dependent.
      Then the tuples $Y_{\ell}=\left(x_{\ell},\ldots,x_{\ell+n-1}\right)$
      form an $(n+k-1)$-dependent sequence. Applying \cref{thm:lackTri} to the
      insertion process on $G$, it follows that $G$ has a directed triangle
      $(a,b,c)$. Hence we may write
				\[
					a=(x_1,x_2,\ldots,x_n),\quad b=(x_2,\ldots,x_{n+1}),
          \quad c=(x_3,\ldots,x_{n+2}).
				\]
				Since the edge $(a,c)$ is present in $G$, we must have
				\[
					(x_2,x_3,\ldots,x_n)=(x_3,\ldots,x_{n+1}).
				\]
				Thus $x_2=x_3=\cdots=x_{n+1}$, so $b$ is a constant sequence.
        This contradicts our assumption that the vertex set of $G$ lacks
        elements of the form $(i,\ldots,i)$. Therefore the associated
        insertion process is not $k$-dependent.
		\end{proof}

	\section*{Acknowledgements}
		I thank Alexander Holroyd for posing the questions which inspired this
    paper. I am grateful to Alexander Holroyd and Christopher Hoffman for
    valuable discussions as well as their patience. Furthermore I thank the
    feedback of an anonymous referee for providing the inspiration both for the consideration of the (EC) condition and for \Cref{question:4}, in addition to numerous further helpful suggestions.

	\bibliographystyle{habbrv}
	\bibliography{coloring}

\end{document}